\date{\scriptsize   Received: , Accepted: .}
\newtheorem{theorem}{Theorem}[section]
\newtheorem{proposition}[theorem]{Proposition}
\newtheorem{lemma}[theorem]{Lemma}
\newtheorem{corollary}[theorem]{Corollary}
\theoremstyle{definition}
\newtheorem{definition}[theorem]{Definition}
\newtheorem{example}[theorem]{Example}
\newtheorem{remark}[theorem]{Remark}
\theoremstyle{remark}
\DeclareMathOperator{\Sp}{Sp}
\numberwithin{equation}{section}
\begin{document}

 
\title[Hermite-Hadamard type inequality for  operator $G$-convex]{Hermite-Hadamard type inequalities for operator geometrically convex functions II } 
 
\author[A. Taghavi]{Ali Taghavi$^*$}
\address[Ali Taghavi]{Department of Mathematics, Faculty of Mathematical
Sciences, University of Mazandaran, P. O. Box 47416-1468,
Babolsar, Iran.}
\email{taghavi@umz.ac.ir}

\author[V. Darvish]{Vahid Darvish }
\address[Vahid Darvish]{Department of Mathematics, Faculty of Mathematical
Sciences, University of Mazandaran, P. O. Box 47416-1468,
Babolsar, Iran.}
\email{vahid.darvish@mail.com}

\author[T. A. Roushan]{Tahere Azimi Roushan }
\address[Tahere Azimi Roushan]{Department of Mathematics, Faculty of Mathematical
Sciences, University of Mazandaran, P. O. Box 47416-1468,
Babolsar, Iran.}
\email{t.roushan@umz.ac.ir}

  \thanks{$^*$Corresponding author}
%
 
 \maketitle
%

\begin{abstract}
In this paper, we prove some Hermite-Hadamard type inequalities for operator geometrically convex functions for non-commutative operators.\\
\textbf{Keywords:}  Operator geometrically convex function, Hermite-Hadamard inequality. \\
\textbf{MSC(2010):}  Primary: 47A63; Secondary: 15A60, 47B05, 47B10, 26D15.
\end{abstract}
 
\section{Introduction and preliminaries}
Let $B(H)$ stand for $C^{*}$-algebra of  all bounded linear
operators on a complex Hilbert space $H$ with inner
product $\langle \cdot,\cdot\rangle$. An operator $A\in B(H)$ is strictly positive and write $A>0$ if $\langle
Ax,x\rangle>0$ for all $x\in H$. Let $B(H)^{++}$ stand for all strictly positive operators on $B(H)$.

Let $A$ be a self-adjoint operator in $B(H)$. The Gelfand map establishes a $\ast$-isometrically isomorphism $\Phi$ between the set $C(\Sp(A))$ of all continuous functions defined on the spectrum of $A$, denoted $\Sp(A)$, and the $C^{*}$-algebra $C^{*}(A)$ generated by $A$ and the identity operator $1_{H}$ on $H$ as follows:

For any $f,g\in C(\Sp(A)))$ and any $\alpha, \beta\in\mathbb{C}$ we have:
\begin{itemize}
\item
$\Phi(\alpha f+\beta g)=\alpha \Phi(f)+\beta \Phi(g);$
\item
$\Phi(fg)=\Phi(f)\Phi(g)$ and $ \Phi(\bar{f})=\Phi(f)^{*};$
\item
$\|\Phi(f)\|=\|f\|:=\sup_{t\in \Sp(A)}|f(t)|;$
\item
$\Phi(f_{0})=1_{H}$ and $\Phi(f_{1})=A,$ where $f_{0}(t)=1$ and $f_{1}(t)=t$, for $t\in \Sp(A)$.
\end{itemize}
with this notation we define
$$f(A)=\Phi(f) \ \text{for all} \ \ f\in C(\Sp(A))$$
 and we call it the continuous functional calculus for a self-adjoint operator $A$.
 
 If $A$ is a self-adjoint operator and  both $f$ and $g$ are real valued functions on $\Sp(A)$ then the following important property holds:
\begin{equation}\label{e3}
f(t)\geq g(t) \ \ \text{for any} \ \ t\in \Sp(A)\ \ \text{implies that} \ \ f(A)\geq g(A),
\end{equation}
in the operator order of $B(H)$, see \cite{zhu}.

A real valued continuous function $f:\mathbb{R}\to\mathbb{R}$ is said to be convex (concave) if 
$$f(\lambda a+(1-\lambda)b)\leq (\geq) \lambda f(a)+(1-\lambda) f(b)$$
for $a,b\in \mathbb{R}$ and $\lambda\in [0,1]$.

The following Hermite-Hadamard inequality holds for any convex function $f$ defined on $\mathbb{R}$
\begin{eqnarray}
(b-a)f\left(\frac{a+b}{2}\right)&\leq& \int_{a}^{b} f(x)dx\nonumber\\
&\leq& (b-a)\frac{f(a)+f(b)}{2}, \ \ \text{for} \ a,b\in\mathbb{R}.\label{ro}
\end{eqnarray}
The author of \cite[Remark 1.9.3]{nic2} gave the following refinement of Hermite-Hadamard inequalities for convex functions
\begin{eqnarray*}
f\left(\frac{ a+b}{2}\right)&\leq&\frac{1}{2}\left(f\left(\frac{3 a+ b}{4}\right)+f\left(\frac{ a+3 b}{4}\right)\right)\\
&\leq&\frac{1}{ b- a}\int_{ a}^{ b} f(x)dx\\
&\leq&\frac{1}{2}\left(f\left(\frac{ a+ b}{2}\right)+\frac{f( a)+f( b)}{2}\right)\\
&\leq&\frac{f( a)+f( b)}{2}.
\end{eqnarray*}

A real valued continuous function is operator convex if $$f(\lambda A+(1-\lambda)B)\leq \lambda f(A)+(1-\lambda)f(B)$$ for self-adjoint operator $A, B\in B(H)$ and $\lambda\in[0,1]$.

 In \cite{dra3} Dragomir investigated the operator version of the Hermite-Hadamard inequality for
operator convex functions. Let $f:\mathbb{R}\to\mathbb{R}$ be an operator convex function on the interval $I$ then, for any self-adjoint operators $A$ and $B$ with spectra in $I$, the following inequalities holds
\begin{eqnarray}
f\left(\frac{A+B}{2}\right)&\leq&2\int_{\frac{1}{4}}^{\frac{3}{4}} f(tA+(1-t)B)dt\\
&\leq& \frac{1}{2}\left[f\left(\frac{3A+B}{4}\right)+f\left(\frac{A+3B}{4}\right)\right]\\
&\leq& \int_{0}^{1}f\left((1-t)A+tB\right)dt\nonumber\\
&\leq&\frac{1}{2}\left[f\left(\frac{A+B}{2}\right)+\frac{f(A)+f(B)}{2}\right]\\
&\leq& \frac{f(A)+f(B)}{2}, \label{kdv}
\end{eqnarray} 
for the first inequality in above, see \cite{taghavi}.

A continuous function $f: I\subseteq \mathbb{R}^{+}\to\mathbb{R}^{+}$, ($\mathbb{R}^{+}$ denoted positive real numbers) is said to be geometrically convex function (or multiplicatively  convex function) if 
$$f(a^{\lambda}b^{1-\lambda})\leq f(a)^{\lambda}f(b)^{1-\lambda}$$
for $a,b\in I$ and $\lambda\in[0,1]$.

The author of  \cite[p. 158]{nic} showed that every polynomial $P(x)$ with non-negative coefficients is a geometrically convex function on $[0,\infty)$. More generally, every real analytic function $f(x)=\sum_{n=0}^{\infty}c_{n}x^{n}$ with non-negative coefficients is geometrically convex function on $(0,R)$ where $R$ denotes the radius of convergence. Also, see \cite{taghavi2,tag}.

In \cite{taghavi}, the following inequalities were obtained for a geometrically convex function
\begin{eqnarray*}
f(\sqrt{ab})&\leq& \sqrt{\left(f(a^{\frac{3}{4}}b^{\frac{1}{4}})f(a^{\frac{1}{4}}b^{\frac{3}{4}})\right)}\\
&\leq& \exp\left(\frac{1}{\log b-\log a}\int_{a}^{b}\frac{\log f(t)}{t}dt\right)\\
&\leq&\sqrt{f(\sqrt{ab})}.\sqrt[4]{f(a)}.\sqrt[4]{f(b)}\\
&\leq& \sqrt{f(a)f(b)}
\end{eqnarray*}

In this paper, we prove some  Hermite-Hadamard inequalities for operator geometrically convex functions. Moreover, in the final section, we present some examples and remarks.

\section{Hermite-Hadamard inequalities for geometrically convex functions}
In this section, we introduce the concept of operator geometrically convex function for positive operators and prove the Hermite-Hadamard type inequalities for this function.

\begin{proposition}\label{tc}
Let $A, B\in B(H)^{++}$ such that $Sp(A), Sp(B)\subseteq I$, then $Sp(A\sharp_{t} B)\subseteq I$, where $A\sharp_{t} B=A^{\frac{1}{2}}{(A^{-\frac{1}{2}}BA^{-\frac{1}{2}})}^{t}A^{\frac{1}{2}}$ is $t$-geometric mean.
\end{proposition}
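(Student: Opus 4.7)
The plan is to reduce the conclusion to the operator inequality $m\cdot 1_H\le A\sharp_t B\le M\cdot 1_H$ for suitable scalars $m,M>0$. Since $\Sp(A)\cup \Sp(B)$ is a compact subset of the interval $I$, set $m=\min\{\inf \Sp(A),\inf \Sp(B)\}$ and $M=\max\{\sup \Sp(A),\sup \Sp(B)\}$; both $m$ and $M$ lie in $I$, so $[m,M]\subseteq I$. The stated containment $\Sp(A\sharp_t B)\subseteq I$ will then follow as soon as we establish the sandwich $m\cdot 1_H\le A\sharp_t B\le M\cdot 1_H$.

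Two ingredients are needed. First, a direct computation from the defining formula gives
\[(c\cdot 1_H)\sharp_t (c\cdot 1_H)=c^{1/2}\bigl(c^{-1/2}\cdot c\cdot c^{-1/2}\bigr)^t c^{1/2}\cdot 1_H=c\cdot 1_H\]
for every $c>0$. Second, I will use joint monotonicity: if $A_1\le A_2$ and $B_1\le B_2$ in $B(H)^{++}$, then $A_1\sharp_t B_1\le A_2\sharp_t B_2$. Monotonicity in the second argument is a one-line consequence of the L\"owner--Heinz inequality: $B_1\le B_2$ gives $A^{-1/2}B_1 A^{-1/2}\le A^{-1/2}B_2 A^{-1/2}$, raising both sides to the power $t\in[0,1]$ preserves the operator order, and sandwiching by $A^{1/2}$ gives $A\sharp_t B_1\le A\sharp_t B_2$. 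Monotonicity in the first argument will follow from the symmetry identity $A\sharp_t B=B\sharp_{1-t}A$ (or, alternatively, from the Kubo--Ando theory, which recognises $\sharp_t$ as an operator mean).

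Combining these two ingredients with the sandwich $m\cdot 1_H\le A,B\le M\cdot 1_H$ yields
\[m\cdot 1_H=(m\cdot 1_H)\sharp_t (m\cdot 1_H)\le A\sharp_t B\le (M\cdot 1_H)\sharp_t (M\cdot 1_H)=M\cdot 1_H,\]
which is precisely the required sandwich, and the proof is complete.

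The main obstacle is monotonicity of $\sharp_t$ in its first argument: unlike the second-argument case, the outer factors $A^{\pm 1/2}$ appear on both sides of the expression, so L\"owner--Heinz is not directly applicable. I would handle this by invoking the symmetry $A\sharp_t B=B\sharp_{1-t}A$, which converts first-argument monotonicity of $\sharp_t$ into second-argument monotonicity of $\sharp_{1-t}$ and reduces everything back to L\"owner--Heinz.
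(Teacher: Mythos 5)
Your proposal is correct, and it rests on the same reduction as the paper's proof --- namely, showing the operator sandwich $m\cdot 1_H\le A\sharp_t B\le M\cdot 1_H$ where $I=[m,M]$ --- but you organize the final step differently. The paper argues by direct computation: from $m\le B\le M$, congruence by $A^{-1/2}$ gives $mA^{-1}\le A^{-1/2}BA^{-1/2}\le MA^{-1}$, operator monotonicity of $x^t$ (L\"owner--Heinz) gives $m^tA^{-t}\le (A^{-1/2}BA^{-1/2})^t\le M^tA^{-t}$, and congruence by $A^{1/2}$ yields $m^tA^{1-t}\le A\sharp_t B\le M^tA^{1-t}$; one further application of L\"owner--Heinz to $m\le A\le M$ with the exponent $1-t$ finishes the proof. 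You instead package the argument as joint monotonicity of $\sharp_t$ together with the scalar identity $(c\cdot 1_H)\sharp_t(c\cdot 1_H)=c\cdot 1_H$, handling monotonicity in the first slot via the transposition formula $A\sharp_t B=B\sharp_{1-t}A$. Both routes are sound and both ultimately reduce to L\"owner--Heinz plus order-preservation under congruence; note, however, that the joint monotonicity you need is precisely the paper's Lemma \ref{hagh2} (quoted from Kubo--Ando), so you could shorten your argument by citing it rather than re-deriving first-argument monotonicity from the symmetry identity. Your version is slightly more conceptual and would apply verbatim to any operator mean, while the paper's is more self-contained since the comparison operators are scalar multiples of the identity and everything can be computed explicitly.
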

\begin{proof}
Let $I=[m, M]$ for some positive real numbers $m, M$ with $m<M$. Since $Sp(A),Sp(B) \subseteq I$ it is equivalent to $m\leq A\leq M$ and $m\leq B\leq M$. So, by virtue of operator monotonicity property of the function $f(x)=x^{t}$ on $(0,\infty)$ for $t\in[0,1]$, and by using the fact that if $a, b$ be self-adjoint operators in $C^{*}$-algebra $\mathcal{A}$, if $a\leq b$ and $c\in \mathcal{A}$ then $c^{*}ac\leq c^{*}bc$, we get the result.
\end{proof}
Now, by applying Proposition \ref{tc}, we present the following definition.
\begin{definition}\label{refe}
A continuous function $f:I\subseteq\mathbb{R}^{+}\to\mathbb{R}^{+}$ is said to be  operator geometrically convex  if $$f(A\sharp_{t}B)\leq f(A)\sharp_{t}f(B)$$ for $A, B\in B(H)^{++}$ such that $\Sp(A), \Sp(B)\subseteq I$.
\end{definition}
We need the following lemmas for proving our theorems.
\begin{lemma}\cite{kub, law}\label{hagh}
Let $A, B\in B(H)^{++}$ and let $t,s,u\in\mathbb{R}$. Then
\begin{equation}
(A\sharp_{t} B)\sharp_{s}(A\sharp_{u} B)=A\sharp_{(1-s)t+su} B.
\end{equation}
\end{lemma}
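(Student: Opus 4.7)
The plan is to reduce the left-hand side to a commutative calculation by introducing the spectral ``ratio'' $X = A^{-1/2} B A^{-1/2}$, which is a positive invertible operator. With this notation the two operands on the left become congruence images of commuting operators:
$$A \sharp_t B = A^{1/2} X^t A^{1/2}, \qquad A \sharp_u B = A^{1/2} X^u A^{1/2}.$$
So I expect the whole problem to factor through the pair $(X^t, X^u)$, which commutes because both are functions of the single positive operator $X$.

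The first step is to establish the congruence-invariance of the weighted geometric mean: for any invertible $S$ and positive invertible $C, D$,
$$S (C \sharp_s D) S^{*} = (S C S^{*}) \sharp_s (S D S^{*}).$$
The clean proof uses the polar decomposition $S C^{1/2} = U P$, from which one derives $(S C S^{*})^{1/2} = U P U^{*}$ and, after a short manipulation, the key identity $(S C S^{*})^{-1/2}(S D S^{*})(S C S^{*})^{-1/2} = U (C^{-1/2} D C^{-1/2}) U^{*}$; raising this to the $s$-th power via functional calculus and sandwiching by $(S C S^{*})^{1/2}$ then matches $S (C \sharp_s D) S^{*}$ term-for-term. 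I expect this congruence lemma to be the main obstacle, since everything else is formal; it may be worth stating and proving it as a separate auxiliary lemma.

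Once the congruence law is in hand, I would apply it with $S = A^{-1/2}$ to obtain
$$A^{-1/2} \bigl[ (A \sharp_t B) \sharp_s (A \sharp_u B) \bigr] A^{-1/2} = X^t \sharp_s X^u.$$
Because $X^t$ and $X^u$ commute, the definition of the geometric mean collapses to ordinary exponent arithmetic:
$$X^t \sharp_s X^u = (X^t)^{1/2} \bigl[ (X^t)^{-1/2} X^u (X^t)^{-1/2} \bigr]^{s} (X^t)^{1/2} = X^{t/2}\, X^{s(u-t)}\, X^{t/2} = X^{(1-s)t + s u}.$$

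Conjugating back by $A^{1/2}$ then gives $A^{1/2} X^{(1-s)t + su} A^{1/2} = A \sharp_{(1-s)t + su} B$, which is the desired identity. A small remark for the write-up: the statement allows arbitrary real $t, s, u$, so the powers $X^t, X^u$ and the sandwich $(X^t)^{-1/2} X^u (X^t)^{-1/2}$ must be interpreted via continuous functional calculus (not by operator monotonicity of $x \mapsto x^t$); this is harmless since $X$ is positive and invertible, so all fractional powers involved are well-defined positive operators.
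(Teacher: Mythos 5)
Your argument is correct, but there is nothing in the paper to compare it against: Lemma \ref{hagh} is stated with citations to Kubo--Ando and Lawson--Lim and the paper supplies no proof of its own, treating the identity as a known property of the weighted geometric mean. Your derivation is the standard self-contained route: the congruence (transformer) law $S(C\sharp_s D)S^{*}=(SCS^{*})\sharp_s(SDS^{*})$ reduces everything to the commuting pair $X^{t},X^{u}$ with $X=A^{-1/2}BA^{-1/2}$, where the mean collapses to $X^{t}\sharp_s X^{u}=X^{t/2}X^{s(u-t)}X^{t/2}=X^{(1-s)t+su}$, and conjugating back by $A^{1/2}$ finishes the proof. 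The steps all check out, including the polar-decomposition proof of the congruence law (from $SC^{1/2}=UP$ one gets $(SCS^{*})^{1/2}=UPU^{*}$ and $(SCS^{*})^{-1/2}(SDS^{*})(SCS^{*})^{-1/2}=U(C^{-1/2}DC^{-1/2})U^{*}$, which matches term by term after sandwiching), and your closing remark is apt: since $t,s,u$ are arbitrary reals, the powers must be read through the continuous functional calculus on the positive invertible operator $X$, not through operator monotonicity. What your write-up buys over the paper's bare citation is a proof valid for all real parameters that uses only the defining formula $A\sharp_t B=A^{1/2}(A^{-1/2}BA^{-1/2})^{t}A^{1/2}$, rather than the axiomatic operator-mean machinery of the cited sources; if you include it, I would indeed isolate the congruence law as a separate auxiliary lemma as you suggest, since it is the only nontrivial ingredient and is reused implicitly elsewhere in arguments of this type.
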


\begin{lemma}\cite{kub}\label{hagh2}
Let $A$, $B$, $C$ and $D$ be operators in $B(H)^{++}$   and let $t\in\mathbb{R}$. Then, we have
\begin{equation}
A\sharp_{t}B\leq C\sharp_{t} D
\end{equation}
for $A\leq C$ and $B\leq D$.
\end{lemma}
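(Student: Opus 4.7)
The plan is to split the desired inequality into the two one-sided monotonicity statements
\begin{equation*}
A\sharp_t B \leq A\sharp_t D \qquad\text{and}\qquad A\sharp_t D \leq C\sharp_t D,
\end{equation*}
each following from one of the two hypotheses, and then chain them. Throughout I would work in the range $t\in[0,1]$, where $x\mapsto x^t$ is operator monotone by the L\"owner--Heinz inequality; this is also the range relevant to Proposition \ref{tc} and Definition \ref{refe}.

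For the first inequality I would fix $A$ and start from $B\leq D$. Conjugating by $A^{-1/2}$ on both sides, exactly as in the proof of Proposition \ref{tc}, gives $A^{-1/2}BA^{-1/2}\leq A^{-1/2}DA^{-1/2}$. Operator monotonicity of $x^t$ on $(0,\infty)$ then yields $(A^{-1/2}BA^{-1/2})^t\leq(A^{-1/2}DA^{-1/2})^t$, and a further conjugation by $A^{1/2}$ produces $A\sharp_t B\leq A\sharp_t D$.

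For the second inequality I would invoke the symmetry identity
\begin{equation*}
A\sharp_t B = B\sharp_{1-t} A,
\end{equation*}
valid for $t\in[0,1]$ and positive invertible operators, in order to reduce first-slot monotonicity to the second-slot monotonicity already established, but with exponent $1-t\in[0,1]$: from $A\leq C$ the previous step (applied with $D$ in the fixed slot and $1-t$ as the parameter) gives $D\sharp_{1-t} A\leq D\sharp_{1-t} C$, which by symmetry rewrites as $A\sharp_t D\leq C\sharp_t D$. Combining the two pieces yields the claim $A\sharp_t B\leq C\sharp_t D$.

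The main obstacle is the symmetry identity itself, which is a standard but nontrivial fact in Kubo--Ando theory; I would either verify it by a direct functional-calculus computation, or bypass it altogether by appealing to the general Kubo--Ando characterization of operator means. Specifically, the map $(A,B)\mapsto A^{1/2}f(A^{-1/2}BA^{-1/2})A^{1/2}$ associated with any positive operator monotone $f$ on $(0,\infty)$ with $f(1)=1$ is jointly monotone in $(A,B)$, and this applies directly to $f(x)=x^t$ for $t\in[0,1]$. Either route delivers the lemma cleanly.
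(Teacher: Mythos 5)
Your argument is correct, and it is worth noting at the outset that the paper supplies no proof at all for this lemma -- it is stated with a bare citation to Kubo--Ando -- so any comparison is between your argument and the standard literature rather than with an in-paper derivation. Your decomposition into second-slot monotonicity (conjugate $B\leq D$ by $A^{-1/2}$, apply L\"owner--Heinz to $x^t$, conjugate back) followed by first-slot monotonicity via the transpose identity $A\sharp_t B=B\sharp_{1-t}A$ is exactly the classical route, and your fallback to the general Kubo--Ando joint monotonicity of $(A,B)\mapsto A^{1/2}f(A^{-1/2}BA^{-1/2})A^{1/2}$ for operator monotone $f$ is the other standard route; either is complete. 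One genuinely useful point you make, which the paper glosses over: the lemma as printed asserts the inequality for all $t\in\mathbb{R}$, but joint monotonicity of $\sharp_t$ fails outside $[0,1]$ (already for $t=-1$ and scalars, $a\mapsto a^{2}b^{-1}$ is increasing in $a$, not decreasing as harmonic-type monotonicity would require, and $x^t$ is not operator monotone for $t\notin[0,1]$), so your restriction to $t\in[0,1]$ is not a convenience but a necessary correction to the statement; fortunately every application in the paper (Theorems \ref{thm}, \ref{mche}, \ref{sta}) uses only $t\in[0,1]$. The only loose end in your write-up is that you defer the verification of the symmetry identity; since you also give the self-contained alternative via joint monotonicity of the representing-function construction, the lemma is delivered either way.
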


\begin{lemma}\label{bim}
Let $A, B\in B(H)^{++}$. If $f:I\subseteq\mathbb{R}^{+}\to\mathbb{R}^{+}$ is a continuous function, then
\begin{equation*}\label{rav}
\int_{0}^{1}f\left(A \sharp_{t} B \right)\sharp f\left(A \sharp_{1-t} B \right)dt \leq\left( \int_{0}^{1}f\left(A \sharp_{t} B \right)dt\right) \sharp\left(\int_{0}^{1} f\left(A \sharp_{1-t} B \right)dt \right)
\end{equation*}
such that $\Sp(A), \Sp(B)\subseteq I$.
\end{lemma}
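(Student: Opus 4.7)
The plan is to invoke Ando's variational characterization of the geometric mean: for strictly positive operators $X, Y$, the mean $X \sharp Y$ is the largest self-adjoint $Z$ such that $\begin{pmatrix} X & Z \\ Z & Y \end{pmatrix} \geq 0$. This converts the desired inequality into a positivity assertion about a $2\times 2$ operator matrix, sidestepping any direct manipulation of the nonlinear formula defining $\sharp$.

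For each $t \in [0,1]$, I set $Z(t) := f(A \sharp_{t} B) \sharp f(A \sharp_{1-t} B)$. By the variational characterization applied pointwise, the block matrix
$$M(t) := \begin{pmatrix} f(A \sharp_{t} B) & Z(t) \\ Z(t) & f(A \sharp_{1-t} B) \end{pmatrix}$$
is positive semidefinite for every $t$. Since $t \mapsto M(t)$ is norm-continuous on $[0,1]$ (using Proposition \ref{tc}, continuity of the functional calculus, and norm-continuity of $\sharp$ on the strictly positive cone), it is Bochner-integrable, and positivity is preserved under strong operator-valued integration. Therefore
$$\int_0^1 M(t)\, dt = \begin{pmatrix} \int_0^1 f(A \sharp_{t} B)\, dt & \int_0^1 Z(t)\, dt \\ \int_0^1 Z(t)\, dt & \int_0^1 f(A \sharp_{1-t} B)\, dt \end{pmatrix} \geq 0,$$
and applying the maximality half of Ando's characterization to the self-adjoint off-diagonal block $\int_0^1 Z(t)\, dt$ forces it to be dominated by the geometric mean of the two diagonal blocks, which is exactly the claim. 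A more elementary alternative uses joint operator concavity of $\sharp$ applied to Riemann sums $t_i = i/n$ with weights $1/n$, followed by $n \to \infty$, using the uniform continuity of $t \mapsto f(A\sharp_t B)$ on the compact interval $[0,1]$ to pass from sums to integrals.

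The main obstacle is a minor technicality rather than a conceptual one: one needs the integrands to be strictly positive so that $\sharp$ is unambiguously defined and the maximality characterization applies. Under the standing assumption $f : I \to \mathbb{R}^+$ (interpreted as strict positivity), this follows from $\Sp(A \sharp_t B) \subseteq I$ together with the functional calculus; the borderline case where $f$ vanishes on part of the spectrum is handled by replacing $f$ with $f + \varepsilon$, establishing the inequality for the perturbation, and letting $\varepsilon \to 0^+$ using norm-continuity of all operations involved.
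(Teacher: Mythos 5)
Your proof is correct, and it takes a genuinely different route from the paper's. The paper establishes the lemma by writing the right-hand side as $\left(\int f(A\sharp_{1-u}B)\,du\right)^{1/2}\bigl(\cdots\bigr)^{1/2}\left(\int f(A\sharp_{1-u}B)\,du\right)^{1/2}$, inserting factors of $f(A\sharp_{1-u}B)^{\pm 1/2}$ inside the integrand, and then applying the operator Jensen inequality for the operator concave function $t\mapsto t^{1/2}$ to push the square root inside the integral; after undoing the conjugation this yields exactly $\int_0^1 f(A\sharp_u B)\sharp f(A\sharp_{1-u}B)\,du$ as a lower bound. You instead invoke Ando's variational characterization of the geometric mean: the pointwise block matrices $M(t)$ are positive, positivity survives integration, and maximality of $\sharp$ among admissible off-diagonal blocks gives the inequality at once. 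Your argument is cleaner and more transparent --- it avoids the delicate chain of conjugations in the paper's computation (which, as printed, even contains a sign slip in one exponent) --- and the key tool is one the paper itself uses later, in the example showing $\phi(t)=(1-t)^{-1}$ is operator geometrically convex, so it is fully within scope. What the paper's route buys is self-containedness modulo the operator Jensen inequality, without appeal to the $2\times 2$ matrix picture. Your closing remarks on strict positivity and the $f+\varepsilon$ perturbation are careful but not strictly needed, since the paper's convention is that $\mathbb{R}^{+}$ denotes the strictly positive reals, so $f(A\sharp_t B)$ and its integral are automatically invertible.
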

\begin{proof}
Since the function $t^{\frac{1}{2}}$ is operator concave, we can write
\begin{small}
\begin{align*}
&\left(\left(\int_{0}^{1}f(A \sharp_{1-u} B)du\right)^{\frac{-1}{2}}\left(\int_{0}^{1}f(A \sharp_{u} B) du\right)\left(\int_{0}^{1}f(A \sharp_{1-u} B) du\right)^{\frac{-1}{2}}\right)^{\frac{1}{2}}\\
& \  \ \ \ \ \ \ \ \ \ \ \ \ \ \ \ \ \  \text {By change of variable $v=1-u$}\\
&=\left(\left(\int_{0}^{1}f(A \sharp_{v} B)dv\right)^{\frac{-1}{2}}\left(\int_{0}^{1}f(A \sharp_{u} B) du\right)\left(\int_{0}^{1}f(A \sharp_{v} B) dv\right)^{\frac{-1}{2}}\right)^{\frac{1}{2}}\\
&=\left(\int_{0}^{1}\left(\int_{0}^{1}f(A \sharp_{v} B)dv\right)^{\frac{1}{2}}f(A \sharp_{u} B)\left(\int_{0}^{1}f(A \sharp_{v} B) dv\right)^{\frac{1}{2}} du \right)^{\frac{1}{2}}\\
\begin{split}
&=\left(\int_{0}^{1}\left(\int_{0}^{1}f(A \sharp_{v} B)dv\right)^{\frac{-1}{2}}(f(A \sharp_{1-u} B))^{\frac{1}{2}}
\left((f(A \sharp_{1-u} B))^{\frac{-1}{2}}f(A \sharp_{u} B)(f(A \sharp_{1-u} B))^{\frac{-1}{2}}\right)\right. \\
&\quad \left. {}\times(f(A \sharp_{1-u} B))^{\frac{1}{2}} \left(\int_{0}^{1}f(A \sharp_{v} B) dv\right)^{\frac{-1}{2}} du\right)^{\frac{1}{2}}
\end{split}\\
& \ \ \ \ \ \   \ \ \ \ \  \ \ \ \ \  \text {By the operator Jensen inequality}
\end{align*}
\end{small}
\begin{small}
\begin{align*}
&\geq\int_{0}^{1}\left(\int_{0}^{1}f(A \sharp_{v} B)dv\right)^{\frac{-1}{2}}(f(A \sharp_{1-u} B))^{\frac{1}{2}}
\left(\left(f(A \sharp_{1-u} B))^{\frac{-1}{2}}f(A \sharp_{u} B)(f(A \sharp_{1-u} B)\right)^{\frac{-1}{2}}\right)^{\frac{1}{2}} \\
&\times(f(A \sharp_{1-u} B))^{\frac{1}{2}} \left(\int_{0}^{1}f(A \sharp_{v} B) dv\right)^{\frac{-1}{2}} du\\
&=\left(\int_{0}^{1}f(A \sharp_{v} B)dv\right)^{\frac{-1}{2}}\int_{0}^{1}(f(A \sharp_{1-u} B))^{\frac{1}{2}}
\left(\left(f(A \sharp_{1-u} B))^{\frac{-1}{2}}f(A \sharp_{u} B)(f(A \sharp_{1-u} B)\right)^{\frac{-1}{2}}\right)^{\frac{1}{2}} \\
&\times(f(A \sharp_{1-u} B))^{\frac{1}{2}}  du\left(\int_{0}^{1}f(A \sharp_{v} B) dv\right)^{\frac{-1}{2}} \\
& \  \ \ \ \ \ \ \ \ \ \ \ \ \ \ \ \ \  \text {By change of variable $u=1-v$ }\\
&=\left(\int_{0}^{1}f(A \sharp_{1-u} B)du\right)^{\frac{-1}{2}}\int_{0}^{1}(f(A \sharp_{1-u} B))^{\frac{1}{2}}
\left(\left(f(A \sharp_{1-u} B))^{\frac{-1}{2}}f(A \sharp_{u} B)(f(A \sharp_{1-u} B)\right)^{\frac{-1}{2}}\right)^{\frac{1}{2}} \\
&\times(f(A \sharp_{1-u} B))^{\frac{1}{2}}  du\left(\int_{0}^{1}f(A \sharp_{1-u} B) du\right)^{\frac{-1}{2}}. 
\end{align*}
\end{small}
So, we obtain
\begin{small}
\begin{align*}
&\left(\left(\int_{0}^{1}f(A \sharp_{1-u} B)du\right)^{\frac{-1}{2}}\left(\int_{0}^{1}f(A \sharp_{u} B) du\right)\left(\int_{0}^{1}f(A \sharp_{1-u} B) du\right)^{\frac{-1}{2}}\right)^{\frac{1}{2}}\\
&\geq \left(\int_{0}^{1}f(A \sharp_{1-u} B)du\right)^{\frac{-1}{2}}\int_{0}^{1}(f(A \sharp_{1-u} B))^{\frac{-1}{2}}
\left(\left(f(A \sharp_{1-u} B))^{\frac{-1}{2}}f(A \sharp_{u} B)(f(A \sharp_{1-u} B)\right)^{\frac{-1}{2}}\right)^{\frac{1}{2}} \\
&\times(f(A \sharp_{1-u} B))^{\frac{1}{2}}  du\left(\int_{0}^{1}f(A \sharp_{1-u} B) du\right)^{\frac{-1}{2}}.
\end{align*} 
\end{small}
Multiplying both side of the above inequality by $\left(\int_{0}^{1}f(A \sharp_{1-u} B) du\right)^{\frac{1}{2}}$ to obtain
\begin{equation*}
\left( \int_{0}^{1}f\left(A \sharp_{u} B \right)du\right) \sharp\left(\int_{0}^{1} f\left(A \sharp_{1-u} B \right)du \right)\geq \int_{0}^{1}f\left(A \sharp_{u} B \right)\sharp f\left(A \sharp_{1-u} B \right)du.
\end{equation*}
\end{proof}
Before giving our theorems in this section, we mention the following remark.
\begin{remark}\label{rem12}
Let  $p\left(x\right)=x^{t}$ and $q\left(x\right)=x^{s}$ on $[1,\infty )$, where $0\leq t\leq s$. If $f\left(A\right)\leq f\left(B\right)$ then $\Sp \left(f\left(A\right)^{\frac{-1}{2}}\left(f(B)\right)f\left(A\right)^{\frac{-1}{2}}\right)\subseteq [1,\infty)$. By functional calculus, we have
$$p\left(f\left(A\right)^{\frac{-1}{2}}f\left(B\right)f\left(A\right)^{\frac{-1}{2}}\right)\leq q\left(f\left(A\right)^{\frac{-1}{2}}f\left(B\right)f\left(A\right)^{\frac{-1}{2}}\right).$$
So, $$\left(f\left(A\right)^{\frac{-1}{2}}f\left(B\right)f\left(A\right)^{\frac{-1}{2}}\right)^{t}\leq\left(f\left(A\right)^{\frac{-1}{2}}f\left(B\right)f\left(A\right)^{\frac{-1}{2}}\right)^{s}.$$
\end{remark}

Now, we are ready to prove Hermite-Hadamard type inequality for operator geometrically convex functions.
\begin{theorem}\label{thm}
Let $f$ be an operator geometrically convex function. Then, we have
\begin{equation}\label{mr}
f\left(A \sharp B \right)\leq \int_{0}^{1} f\left(A \sharp_{t} B \right)dt\leq \int_{0}^{1} f(A)\sharp_{t} f(B) dt.
\end{equation}
Moreover, if $f(A)\leq f(B)$, then we have
\begin{equation}\label{mr123}
\int_{0}^{1}f(A\sharp_{t}B) dt\leq \int_{0}^{1} f(A)\sharp_{t} f(B) dt\leq \frac{1}{2}((f(A)\sharp f(B))+f(B))
\end{equation}
for $A ,B \in B(H)^{++}$. 
\end{theorem}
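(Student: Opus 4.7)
The plan is to treat the three successive inequalities separately, relying on Lemmas~\ref{hagh} and~\ref{bim} for the first, Definition~\ref{refe} for the second, and Remark~\ref{rem12} for the third.

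For the leftmost inequality in \eqref{mr}, the key observation is the identity $(A\sharp_t B)\sharp(A\sharp_{1-t}B)=A\sharp B$, which is Lemma~\ref{hagh} specialized to $s=\frac{1}{2}$ and $u=1-t$. Since $\Sp(A\sharp_t B)\subseteq I$ by Proposition~\ref{tc}, Definition~\ref{refe} applies and yields $f(A\sharp B)\leq f(A\sharp_t B)\sharp f(A\sharp_{1-t}B)$ for every $t\in[0,1]$. Integrating in $t$ and then invoking Lemma~\ref{bim} bounds the right-hand side by $\bigl(\int_0^1 f(A\sharp_t B)\,dt\bigr)\sharp \bigl(\int_0^1 f(A\sharp_{1-t}B)\,dt\bigr)$; the substitution $t\mapsto 1-t$ shows both inner integrals coincide, and idempotence $X\sharp X=X$ of the geometric mean collapses the whole expression to $\int_0^1 f(A\sharp_t B)\,dt$. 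The middle inequality in \eqref{mr} is then immediate: it is the integrated form of the defining pointwise bound $f(A\sharp_t B)\leq f(A)\sharp_t f(B)$.

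The principal content is the right-hand inequality in \eqref{mr123}. Setting $C=f(A)$, $D=f(B)$, and $Y=C^{-\frac{1}{2}}DC^{-\frac{1}{2}}$, the hypothesis $f(A)\leq f(B)$ forces $\Sp(Y)\subseteq[1,\infty)$, placing us in the setting of Remark~\ref{rem12}. The plan is to split the integral at $\frac{1}{2}$: on $[0,\frac{1}{2}]$ the monotonicity of Remark~\ref{rem12} yields $Y^t\leq Y^{\frac{1}{2}}$, and on $[\frac{1}{2},1]$ it yields $Y^t\leq Y$; integrating these pointwise bounds gives
\begin{equation*}
\int_0^1 Y^t\,dt\leq \tfrac{1}{2}\bigl(Y^{\frac{1}{2}}+Y\bigr).
\end{equation*}
Sandwiching by $C^{\frac{1}{2}}$ completes the proof via the identifications $C^{\frac{1}{2}}Y^{\frac{1}{2}}C^{\frac{1}{2}}=C\sharp D$ and $C^{\frac{1}{2}}YC^{\frac{1}{2}}=D$.

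The technical care I foresee is in the first inequality, where the $t\leftrightarrow 1-t$ symmetry must be tracked precisely so that Lemma~\ref{bim} collapses the geometric mean into a single integral. The third inequality, despite appearing delicate, reduces—once $f(A)\leq f(B)$ is rewritten as $Y\geq 1_H$—to an almost elementary application of Remark~\ref{rem12} combined with the linearity of the integral, so I do not anticipate a serious obstacle there.
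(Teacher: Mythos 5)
Your proposal is correct and follows essentially the same route as the paper: Lemma~\ref{hagh} with $s=\tfrac12$, $u=1-t$ plus Lemma~\ref{bim} and the $t\mapsto 1-t$ symmetry for the first inequality, the defining pointwise bound for the second, and Remark~\ref{rem12} with $s=\tfrac12$ on $[0,\tfrac12]$ and $s=1$ on $[\tfrac12,1]$, conjugated by $f(A)^{\frac12}$, for the third. No substantive difference from the paper's argument.
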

\begin{proof}
Let $f$ be a geometrically convex function then we have
\begin{eqnarray*}
f\left(A \sharp B \right)&=&f\left(\left(A \sharp_{t} B \right)\sharp \left(A \sharp_{1-t}B \right)\right) \ \ \ \ \ \text{By Lemma \ref{hagh}}\\
&\leq& f\left(A  \sharp_{t} B \right)\sharp f\left(A \sharp_{1-t} B \right) \ \ \ \ \ \text{$f$ is operator geometrically convex.}
\end{eqnarray*}
Taking integral of the both sides of the above inequalities on $[0,1]$ we obtain
\begin{eqnarray*}
f\left(A \sharp B \right)&\leq&\int_{0}^{1}f\left(A \sharp_{t} B \right)\sharp f\left(A \sharp_{1-t} B \right)dt \\
&\leq&\left( \int_{0}^{1}f\left(A \sharp_{t} B \right)dt\right) \sharp\left(\int_{0}^{1} f\left(A \sharp_{1-t} B \right)dt \right)\ \ \ \text{By Lemma \ref{bim}}\\
&=& \int_{0}^{1} f\left(A \sharp_{t} B \right)dt \\
&\leq& \int_{0}^{1} f\left(A \right)\sharp_{t} f\left(B \right)dt. 
\end{eqnarray*}
For the case $f(A)\leq f(B)$, by applying Remark \ref{rem12} for $s=\frac{1}{2}$ we have
$$\left({f(A)}^{-\frac{1}{2}}f(B){f(A)}^{-\frac{1}{2}}\right)^{t}\leq \left({f(A)}^{-\frac{1}{2}}f(B){f(A)}^{-\frac{1}{2}}\right)^{\frac{1}{2}}.$$
By integrating the above inequality over $t\in [0,\frac{1}{2}]$, we obtain
$$\int_{0}^{\frac{1}{2}}\left({f(A)}^{-\frac{1}{2}}f(B){f(A)}^{-\frac{1}{2}}\right)^{t}dt\leq \frac{1}{2}\left({f(A)}^{-\frac{1}{2}}f(B){f(A)}^{-\frac{1}{2}}\right)^{\frac{1}{2}}.$$
Multiplying both sides of the above inequality by $f(A)^{\frac{1}{2}}$, we have
\begin{eqnarray*}
&&\int_{0}^{\frac{1}{2}}f(A)^{\frac{1}{2}}\left({f(A)}^{-\frac{1}{2}}f(B){f(A)}^{-\frac{1}{2}}\right)^{t}f(A)^{\frac{1}{2}}dt\\&&\leq \frac{1}{2}\left(f(A)^{\frac{1}{2}}\left({f(A)}^{-\frac{1}{2}}f(B){f(A)}^{-\frac{1}{2}}\right)^{\frac{1}{2}}f(A)^{\frac{1}{2}}\right).
\end{eqnarray*}

It follows that
\begin{equation}\label{ta}
\int_{0}^{\frac{1}{2}}f(A)\sharp_{t}f(B)\leq\frac{f(A)\sharp f(B)}{2}.
\end{equation}
On the other hand, by considering Remark \ref{rem12} for $s=1$ we have
$$\left({f(A)}^{-\frac{1}{2}}f(B){f(A)}^{-\frac{1}{2}}\right)^{t}\leq {f(A)}^{-\frac{1}{2}}f(B){f(A)}^{-\frac{1}{2}}.$$
Integrating the above inequality over $t\in [\frac{1}{2},1]$, we get
$$\int_{\frac{1}{2}}^{1}\left({f(A)}^{-\frac{1}{2}}f(B){f(A)}^{-\frac{1}{2}}\right)^{t}dt\leq \frac{1}{2}\left({f(A)}^{-\frac{1}{2}}f(B){f(A)}^{-\frac{1}{2}}\right).$$
By multiplying both side of the above inequality by $f(A)^{\frac{1}{2}}$, we have
$$\int_{\frac{1}{2}}^{1}f(A)^{\frac{1}{2}}\left({f(A)}^{-\frac{1}{2}}f(B){f(A)}^{-\frac{1}{2}}\right)^{t}f(A)^{\frac{1}{2}}dt\leq \frac{f(B)}{2}.$$
It follows that
\begin{equation}\label{tb}
\int_{\frac{1}{2}}^{1}f(A)\sharp_{t}f(B)\leq\frac{f(B)}{2}.
\end{equation}
From inequalities (\ref{ta}) and (\ref{tb}) we obtain 
\begin{eqnarray*}
\int_{0}^{\frac{1}{2}}f(A\sharp_{t}B)dt+\int_{\frac{1}{2}}^{1}f(A\sharp_{t}B)dt&\leq&\int_{0}^{\frac{1}{2}}f(A)\sharp_{t}f(B)dt+\int_{\frac{1}{2}}^{1}f(A)\sharp_{t}f(B)dt\\
&\leq&\frac{f(A)\sharp f(B)}{2}+\frac{f(B)}{2}.
\end{eqnarray*}
It follows that
$$\int_{0}^{1}f(A\sharp_{t}B)dt\leq\int_{0}^{1}f(A)\sharp_{t}f(B)dt\leq\frac{1}{2}((f(A)\sharp f(B))+f(B)).$$
\end{proof}

By making use of inequalities (\ref{mr}) and (\ref{mr123}), we have the following result.
\begin{corollary}\label{narenj}
Let $f$ be an operator geometrically convex function. Then, if $f(A)\leq f(B)$  we have
\begin{equation}\label{mr222}
f(A\sharp B)\leq \int_{0}^{1} f\left(A\sharp_{t} B\right) dt  \leq \frac{1}{2}\left((f(A)\sharp f(B))+f(B)\right).
\end{equation}
for $A,B\in B(H)^{++}$.
\end{corollary}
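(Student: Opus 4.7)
The plan is to observe that Corollary~\ref{narenj} is essentially just a concatenation of the two chains of inequalities already established in Theorem~\ref{thm}, so no substantive new work is required; the proof is a short bookkeeping argument.

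First I would invoke the first inequality in~(\ref{mr}) from Theorem~\ref{thm}, which holds for any operator geometrically convex function $f$ and any $A,B\in B(H)^{++}$ (with no extra hypothesis on $f(A),f(B)$), to get
\[
f(A\sharp B)\leq \int_{0}^{1} f\left(A\sharp_{t} B\right) dt.
\]
This gives the left half of the desired chain.

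Next, under the additional hypothesis $f(A)\leq f(B)$, I would invoke~(\ref{mr123}) from Theorem~\ref{thm}, which in particular yields
\[
\int_{0}^{1} f\left(A\sharp_{t} B\right) dt \leq \frac{1}{2}\left((f(A)\sharp f(B))+f(B)\right),
\]
i.e.\ one just chains the two inequalities in~(\ref{mr123}) to drop the middle term $\int_{0}^{1} f(A)\sharp_{t} f(B)\,dt$. Concatenating the two displays above produces the stated Corollary.

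There is no real obstacle here, since the two ingredients from Theorem~\ref{thm} have already been proved in the preceding pages. The only thing to be slightly careful about is that the leftmost inequality $f(A\sharp B)\leq \int_{0}^{1} f(A\sharp_{t} B)\,dt$ does not require the hypothesis $f(A)\leq f(B)$ (it is just the first inequality in~(\ref{mr})), whereas the upper bound by $\tfrac{1}{2}((f(A)\sharp f(B))+f(B))$ does, so the hypothesis of the corollary is used only in the second step. Writing the proof amounts to citing Theorem~\ref{thm} twice and combining.
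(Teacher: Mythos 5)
Your proposal is correct and is exactly how the paper obtains Corollary~\ref{narenj}: the text preceding the corollary says it follows ``by making use of inequalities (\ref{mr}) and (\ref{mr123})'', i.e.\ the first inequality of (\ref{mr}) gives the lower bound and (\ref{mr123}) gives the upper bound under the hypothesis $f(A)\leq f(B)$. Your remark about where the hypothesis $f(A)\leq f(B)$ is actually needed is accurate and consistent with the paper.
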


\begin{theorem}\label{mche}
Let $f$ be an operator geometrically convex function. Then, we have
\begin{equation}
f\left(A \sharp B \right)\leq \int_{0}^{1} f\left(A \sharp_{t} B \right)\sharp f\left(A\sharp_{1-t} B\right) dt\leq f(A)\sharp f(B)
\end{equation}
for $A ,B \in B(H)^{++}$.
\end{theorem}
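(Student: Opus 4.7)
The plan is to use Lemma \ref{hagh} with $s=\tfrac{1}{2}$ to recognize $A\sharp B$ as the geometric mean of $A\sharp_t B$ and $A\sharp_{1-t}B$, then to exploit operator geometric convexity twice, once to bound $f(A\sharp B)$ from above by the integrand, and once (together with the monotonicity Lemma \ref{hagh2}) to bound the integrand from above by $f(A)\sharp f(B)$.

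For the left-hand inequality, I would first apply Lemma \ref{hagh} with $s=\tfrac{1}{2}$ and $u=1-t$: the exponent becomes $(1-\tfrac{1}{2})t+\tfrac{1}{2}(1-t)=\tfrac{1}{2}$, so
\begin{equation*}
A\sharp B \;=\; (A\sharp_t B)\sharp(A\sharp_{1-t} B).
\end{equation*}
Applying $f$ and using operator geometric convexity (Definition \ref{refe}), I obtain
\begin{equation*}
f(A\sharp B)\;\le\; f(A\sharp_t B)\sharp f(A\sharp_{1-t}B)
\end{equation*}
for every $t\in[0,1]$. Integrating over $[0,1]$ and noting that the left side is independent of $t$ yields the desired lower bound.

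For the right-hand inequality, I would apply operator geometric convexity in the other direction to each factor,
\begin{equation*}
f(A\sharp_t B)\le f(A)\sharp_t f(B),\qquad f(A\sharp_{1-t} B)\le f(A)\sharp_{1-t} f(B),
\end{equation*}
and then use the joint monotonicity of $\sharp$ in Lemma \ref{hagh2} to conclude
\begin{equation*}
f(A\sharp_t B)\sharp f(A\sharp_{1-t}B)\;\le\;\bigl(f(A)\sharp_t f(B)\bigr)\sharp\bigl(f(A)\sharp_{1-t} f(B)\bigr).
\end{equation*}
A second appeal to Lemma \ref{hagh} (again with $s=\tfrac{1}{2}$, $u=1-t$) collapses the right-hand side to $f(A)\sharp f(B)$, which is independent of $t$. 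Integrating over $[0,1]$ gives the claimed upper bound.

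I do not anticipate a genuine obstacle here; the only point that requires a moment's care is checking the parameter arithmetic in Lemma \ref{hagh} so that both invocations collapse to the $\sharp$-mean at $\tfrac{1}{2}$. Note also that, unlike in Theorem \ref{thm}, no auxiliary inequality such as Lemma \ref{bim} is needed, because here the integrand already has a symmetric $\sharp$ structure that Lemma \ref{hagh} handles directly.
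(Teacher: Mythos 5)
Your proposal is correct and follows essentially the same route as the paper: both recognize $A\sharp B=(A\sharp_t B)\sharp(A\sharp_{1-t}B)$ via Lemma \ref{hagh}, apply the definition of operator geometric convexity, use Lemma \ref{hagh2} to pass to $(f(A)\sharp_t f(B))\sharp(f(A)\sharp_{1-t}f(B))=f(A)\sharp f(B)$, and integrate over $t\in[0,1]$. Your explicit check of the parameter arithmetic $(1-\tfrac12)t+\tfrac12(1-t)=\tfrac12$ is a welcome detail the paper leaves implicit.
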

\begin{proof}
We can write
\begin{eqnarray*}
f(A\sharp B)&=& f\left((A\sharp_{t} B)\sharp (A\sharp_{1-t}B)\right) \ \ \ \text{By Lemma \ref{hagh}}\\
&\leq& f(A\sharp_{t} B)\sharp f(A\sharp_{1-t} B)\  \ \ \ \text{$f$ is operator geometrically convex}\\
&\leq& \left(f(A)\sharp_{t} f(B)\right)\sharp\left(f(A)\sharp_{1-t} f(B)\right)  \ \ \ \text{By Lemma \ref{hagh2}}\\
&=& f(A)\sharp f(B).
\end{eqnarray*}
So, we obtain
$$f(A\sharp B)\leq f(A\sharp_{t} B)\sharp f(A\sharp_{1-t} B)\leq f(A)\sharp f(B).$$
Integrating the above inequality over $t\in[0,1]$ we obtain
the desired result.
\end{proof}

We divide the interval $[0,1]$ to the interval $[\nu,1-\nu]$ when $\nu\in[0,\frac{1}{2})$ and to the  interval $[1-\nu,\nu]$ when $\nu\in (\frac{1}{2},1]$. The we have the following inequalities.
\begin{theorem}\label{sta}
Let $A,B\in B(H)^{++}$ such that $f(A)\leq f(B)$. Then, we have
\begin{enumerate}
\item For $\nu\in[0,\frac{1}{2})$
\begin{equation}\label{sys1}
f(A)\sharp_{\nu}f(B)\leq \frac{1}{1-2\nu}\int_{\nu}^{1-\nu}f(A)\sharp_{t} f(B) dt\leq f(A)\sharp_{1-\nu} f(B).
\end{equation}

\item
For $\nu\in(\frac{1}{2},1]$
\begin{equation}\label{sys2}
f(A)\sharp_{1-\nu}f(B)\leq \frac{1}{2\nu-1}\int_{1-\nu}^{\nu}f(A)\sharp_{t} f(B) dt\leq f(A)\sharp_{\nu} f(B).
\end{equation}
\end{enumerate}
\end{theorem}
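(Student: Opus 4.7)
The plan is to use Remark \ref{rem12} in its operator-monotonicity form: whenever $f(A)\leq f(B)$, the operator $X:=f(A)^{-1/2}f(B)f(A)^{-1/2}$ satisfies $\Sp(X)\subseteq[1,\infty)$, and on $[1,\infty)$ the scalar function $t\mapsto x^{t}$ is monotonically increasing for each fixed $x\geq 1$. By the order-preservation property \eqref{e3} of the continuous functional calculus, this yields
\begin{equation*}
X^{a}\leq X^{t}\leq X^{b}\qquad\text{whenever }a\leq t\leq b.
\end{equation*}
This single monotonicity statement is the engine driving both parts of the theorem.

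For part (1), I would fix $\nu\in[0,\tfrac12)$ and apply the above with $a=\nu$, $b=1-\nu$, so that for every $t\in[\nu,1-\nu]$ one has $X^{\nu}\leq X^{t}\leq X^{1-\nu}$. Integrating in $t$ over the interval $[\nu,1-\nu]$, whose length is $1-2\nu>0$, gives
\begin{equation*}
(1-2\nu)X^{\nu}\;\leq\;\int_{\nu}^{1-\nu}X^{t}\,dt\;\leq\;(1-2\nu)X^{1-\nu}.
\end{equation*}
Dividing by $1-2\nu$ and sandwiching by $f(A)^{1/2}$ on either side (which preserves the order since conjugation by a self-adjoint operator is order-preserving on self-adjoint operators) converts each $X^{s}$ into $f(A)\sharp_{s}f(B)$, producing inequality \eqref{sys1}.

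Part (2) is the mirror image. For $\nu\in(\tfrac12,1]$, I would use $a=1-\nu$, $b=\nu$, noting that now $1-\nu<\nu$ so the interval $[1-\nu,\nu]$ has positive length $2\nu-1$; the same monotonicity on $[1,\infty)$ gives $X^{1-\nu}\leq X^{t}\leq X^{\nu}$ for all $t\in[1-\nu,\nu]$. Integrating, dividing by $2\nu-1$, and conjugating by $f(A)^{1/2}$ yields \eqref{sys2}. No genuine obstacle is anticipated; the only point to be careful about is confirming that conjugation of the integrated inequality by $f(A)^{1/2}$ commutes with the integral (which holds because $f(A)^{1/2}$ is a fixed bounded operator independent of $t$, so it passes through the Bochner integral), and that $x^{t}$ is genuinely increasing in $t$ on $[1,\infty)$ (trivial from $\frac{d}{dt}x^{t}=x^{t}\log x\geq 0$ for $x\geq 1$), which is precisely what is recorded in Remark \ref{rem12}.
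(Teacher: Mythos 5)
Your proposal is correct and follows essentially the same route as the paper: both use Remark \ref{rem12} to get $X^{\nu}\leq X^{t}\leq X^{1-\nu}$ (resp.\ $X^{1-\nu}\leq X^{t}\leq X^{\nu}$) for $X=f(A)^{-1/2}f(B)f(A)^{-1/2}$, integrate over the subinterval, normalize by its length, and conjugate by $f(A)^{1/2}$ to recover the weighted geometric means. Your added remarks on passing the conjugation through the Bochner integral and on the monotonicity of $x^{t}$ for $x\geq1$ only make explicit what the paper leaves implicit.
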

\begin{proof}
Let $\nu\in[0,\frac{1}{2})$, then by Remark \ref{rem12} we have
\begin{eqnarray*}
\left(f(A)^{\frac{-1}{2}}f(B)f(A)^{\frac{-1}{2}}\right)^{\nu}&\leq&\left(f(A)^{\frac{-1}{2}}f(B)f(A)^{\frac{-1}{2}}\right)^{t}\\
&\leq& \left(f(A)^{\frac{-1}{2}}f(B)f(A)^{\frac{-1}{2}}\right)^{1-\nu}
\end{eqnarray*}
for $\nu\leq t\leq 1-\nu$ and $A ,B \in B(H)^{++}$ such that $\Sp \left(A \right), \Sp\left(B \right)\subseteq I$.\\
By integrating the above inequality over $t\in[\nu,1-\nu]$ we obtain
\begin{eqnarray*}
\int_{\nu}^{1-\nu}\left(f(A)^{\frac{-1}{2}}f(B)f(A)^{\frac{-1}{2}}\right)^{\nu} dt&\leq&\int_{\nu}^{1-\nu}\left(f(A)^{\frac{-1}{2}}f(B)f(A)^{\frac{-1}{2}}\right)^{t} dt\\
& \leq&\int_{\nu}^{1-\nu}\left(f(A)^{\frac{-1}{2}}f(B)f(A)^{\frac{-1}{2}}\right)^{1-\nu} dt.
\end{eqnarray*}
It follows that
\begin{eqnarray*}
\left(f(A)^{\frac{-1}{2}}f(B)f(A)^{\frac{-1}{2}}\right)^{\nu}&\leq&
\frac{1}{1-2\nu}\int_{\nu}^{1-\nu}\left(f(A)^{\frac{-1}{2}}f(B)f(A)^{\frac{-1}{2}}\right)^{t} dt\\
&\leq& \left(f(A)^{\frac{-1}{2}}f(B)f(A)^{\frac{-1}{2}}\right)^{1-\nu}.
\end{eqnarray*}
Multiplying the both sides of the above inequality by $f(A)^{\frac{1}{2}}$ gives us
$$f(A)\sharp_{\nu}f(B)\leq \frac{1}{1-2\nu}\int_{\nu}^{1-\nu}f(A)\sharp_{t} f(B) dt\leq f(A)\sharp_{1-\nu} f(B).$$
Also, we know that
\begin{eqnarray*}
\lim_{\nu\to\frac{1}{2}} f(A)\sharp_{\nu}f(B)&=&\lim_{\nu\to\frac{1}{2}} \frac{1}{1-2\nu}\int_{\nu}^{1-\nu}f(A)\sharp_{t} f(B) dt\\
&=&\lim_{\nu\to\frac{1}{2}}f(A)\sharp_{1-\nu} f(B)\\
& =&f(A)\sharp f(B).
\end{eqnarray*}
Similarily, for $\nu\in(\frac{1}{2},1]$, by a same proof as above we get
$$f(A)\sharp_{1-\nu} f(B)\leq\frac{1}{2\nu-1}\int_{1-\nu}^{\nu}f(A)\sharp_{t} f(B) dt\leq  f(A)\sharp_{\nu} f(B).$$
\end{proof}
By definition of geometrically convex function and (\ref{sys1}) we have
\begin{eqnarray*}
f\left(A \sharp_{\nu} B \right)&\leq& \frac{1}{1-2\nu} \int_{\nu}^{1-\nu} f\left(A \sharp_{t} B \right)dt\\
&\leq& \frac{1}{1-2\nu} \int_{\nu}^{1-\nu} f(A)\sharp_{t} f(B) dt\\
&\leq&f(A)\sharp_{1-\nu} f(B)
\end{eqnarray*}
for $\nu\in[0,\frac{1}{2})$. We should mention here that
$$\lim_{\nu\to \frac{1}{2}}\frac{1}{1-2\nu} \int_{\nu}^{1-\nu} f\left(A \sharp_{t} B \right)dt=\lim_{\nu\to\frac{1}{2}}f(A\sharp_{\nu}B)=f(A\sharp B).$$
On the other hand, by definition of geometrically convex function and (\ref{sys2}) we have
\begin{eqnarray*}
f\left(A \sharp_{1-\nu} B \right)&\leq& \frac{1}{2\nu-1} \int_{1-\nu}^{\nu} f\left(A \sharp_{t} B \right)dt\\
&\leq& \frac{1}{2\nu-1} \int_{1-\nu}^{\nu} f(A)\sharp_{t} f(B) dt\\
&\leq&f(A)\sharp_{\nu} f(B)
\end{eqnarray*}
for $\nu\in(\frac{1}{2},1]$.

\section{Examples and remarks}
In this section we give some examples of the results that obtained in the previous section.
\begin{remark}
For positive $A,B\in B(H)$, Ando proved in \cite{and} that if $\Psi$ is a positive linear map, then we have
$$\Psi(A\sharp B)\leq \Psi(A)\sharp \Psi(B).$$
The above inequality shows that we can find some examples for Definition \ref{refe} when $f$ is linear.
\end{remark}
\begin{example}
It is easy to check that the function $f(t)=t^{-1}$ is operator geometrically convex for operators in $B(H)^{++}$.
\end{example}
 
\begin{definition} Let $\phi$ be a map on $C^{*}$-algebra $B(H)$. We say that $\phi$ is $2$-positive if the $2\times2$ operator matrix 
$\begin{bmatrix}
A & B \\ B^{*} & C 
\end{bmatrix}\geq0$
then we have
$\begin{bmatrix}
\phi(A) & \phi(B) \\  \phi(B^{*}) & \phi(C)
\end{bmatrix}\geq0$.
\end{definition}
In \cite{lin}, M. Lin gave an example of a $2$-positive map over contraction operators (i.e., $\|A\|<1$). He proved that 
\begin{equation}\label{referee}
\phi(t)=(1-t)^{-1}
\end{equation}
is $2$-positive.

\begin{example}\label{disg}
Let $A$ and $B$ be two contraction operators in $B(H)^{++}$. Then it is easy to check $A\sharp B$ is also a contraction and positive. Also, we know the $2\times 2$ operator matrix 
$$\begin{bmatrix}
A & A\sharp B \\ A\sharp B & B 
\end{bmatrix}$$
is semidefinite positive.
Hence, by (\ref{referee}) we obtain 
$$\begin{bmatrix}
(I-A)^{-1} & (I-(A\sharp B))^{-1} \\ (I-(A\sharp B))^{-1} & (I-B)^{-1}
\end{bmatrix}$$
is semidefinite positive. 

On the other hand, by Ando's characterization of the geometric mean if $X$ is a Hermitian matrix and 
$$\begin{bmatrix}
A & X \\ X & B 
\end{bmatrix}\geq0 , $$
then $X\leq A\sharp B$.\\ So we conclude that $(I-(A\sharp B))^{-1}\leq (I-A)^{-1}\sharp(I-B)^{-1}$. Therefore, the function $\phi(t)=(1-t)^{-1}$ is operator geometrically convex.
\end{example}
Also, Lin proved that the function $$\phi(t)=\frac{1+t}{1-t}$$ is $2$-positive over contractions. By the same argument as Example \ref{disg} we can say the above function is operator geometrically convex too.

\begin{example}
In the proof of \cite[Theorem 4.12]{jun}, by applying H\"{o}lder-McCarthy inequality the authors showed the following  inequalities
\begin{eqnarray*}
\langle A\sharp_{\alpha} B x,x\rangle &=&\left\langle \left(A^{\frac{-1}{2}}BA^{\frac{-1}{2}}\right)^{\alpha}A^{\frac{1}{2}}x,A^{\frac{1}{2}}x\right\rangle\\
&\leq& \left\langle \left(A^{\frac{-1}{2}}BA^{\frac{-1}{2}}\right)A^{\frac{1}{2}}x,A^{\frac{1}{2}}x\right\rangle^{\alpha}\left\langle A^{\frac{1}{2}}x,A^{\frac{1}{2}}x\right\rangle^{1-\alpha}\\
&=&\langle Ax,x\rangle^{1-\alpha}\langle Bx,x\rangle^{\alpha}\\
&=&\langle Ax,x\rangle \sharp_{\alpha} \langle Bx,x\rangle
\end{eqnarray*}
for $x\in H$ and $\alpha\in[0,1]$. By taking the supremum over unit vector $x$, we obtain that $f(x)=\|x\|$ is geometrically convex function for usual operator norms.
\end{example}
By the above example and Corollary \ref{narenj}, when $\|A\|\leq\|B\|$ we have
\begin{equation}
\|A\sharp B\|\leq \int_{0}^{1} \|A\sharp_{t} B\| dt  \leq \frac{1}{2}(\sqrt{\|A\|\|B\|}+\|B\|)
\end{equation}
for $A,B\in B(H)^{++}$



\end{document}